\documentclass{amsart}
\usepackage[latin1]{inputenc}
\usepackage{a4}
\usepackage{morefloats}
\usepackage[english]{babel}
\usepackage{amsmath, amsfonts, amssymb, amsthm}
\usepackage{floatflt}
\usepackage{multicol}
\usepackage{caption}
\usepackage{xcolor}
\usepackage{pdfpages} 
\usepackage{graphicx}

\newtheorem{lem}{Lemma}

\newtheorem*{rem}{Remark}
\newtheorem{pro}{Proposition}
\newtheorem{con}{Construction}
\newtheorem*{ac}{Acknowledgement}


\begin{document}


\title[Some Constructions of the Golden Ratio in an Arbitrary Triangle]{Some Constructions of the Golden Ratio in an Arbitrary Triangle}

\author{Tran Quang Hung}
\address{High school for Gifted students, Hanoi University of Science, Hanoi National University, Hanoi, Vietnam.}
\email{analgeomatica@gmail.com}

\date{\today}

\begin{abstract}
We establish some new constructions of the golden ratio in an arbitrary triangle using symmedians and nine-point circle.
\end{abstract}
\subjclass[2010]{51M04, 51N20}
\keywords{Golden Ratio, triangle geometry, symmedians, nine-point circle.}

\maketitle

\section{Introduction}

The golden ratio often appear in regular polygons \cite{oai,3,6,7,hung} and less in the isosceles triangle \cite{2,8}. The author introduced a construction of the golden ratio in an arbitrary triangle with two symmedians in \cite{9}. Continuing with this idea, we shall introduce some new other constructions of the golden ratio in an arbitrary triangle with symmedians and nine-point circle. Some constructions can be considered as the generalization of classical construction of George Odom in \cite{5}.

\section{The lemmas}

We recall the lemmas with proofs from \cite{9}.

\begin{lem}\label{lem1}
Given a convex cyclic quadrilateral $ABCD$. Two diagonals $AC$ and $BD$ interesect at $P$. Then
$$
\frac{PA}{PC}=\frac{AB\cdot AD}{CB\cdot CD}.
$$
\end{lem}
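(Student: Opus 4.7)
The plan is to decompose the desired ratio $\tfrac{PA}{PC}$ through the intermediate point $D$ (or equivalently $B$) by writing $\tfrac{PA}{PC}=\tfrac{PA}{PD}\cdot\tfrac{PD}{PC}$ and evaluating each factor using a pair of similar triangles supplied by the inscribed angle theorem at $P$.

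First, I would observe that in the cyclic quadrilateral, the inscribed angles $\angle BAC$ and $\angle BDC$ subtend the same arc $BC$, hence $\angle PAB=\angle PDC$. Combined with the vertical angles $\angle APB=\angle DPC$, this yields $\triangle APB\sim\triangle DPC$, from which
$$\frac{PA}{PD}=\frac{AB}{DC}.$$
Next, because $\angle ADB$ and $\angle ACB$ both subtend arc $AB$, we have $\angle PDA=\angle PCB$; together with the vertical angles $\angle APD=\angle BPC$ this gives $\triangle APD\sim\triangle BPC$, so
$$\frac{PD}{PC}=\frac{AD}{BC}.$$

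Multiplying the two relations collapses the $PD$ factor and produces
$$\frac{PA}{PC}=\frac{PA}{PD}\cdot\frac{PD}{PC}=\frac{AB\cdot AD}{CB\cdot CD},$$
as desired.

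There is no real obstacle here; the only thing to be careful about is the correspondence of vertices in the two similarities, so that the sides line up in the intended order and the factor $PD$ indeed cancels between the two proportions. Convexity of $ABCD$ is used implicitly to ensure that $P$ lies in the interior of both diagonals, so the angle identifications and the vertical-angle pairings are valid without sign issues.
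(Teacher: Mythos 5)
Your proof is correct and is essentially the paper's own argument: both use the two similarities $\triangle APB\sim\triangle DPC$ and $\triangle APD\sim\triangle BPC$ arising from inscribed and vertical angles, and multiply the resulting proportions. The only cosmetic difference is that you chain the ratio through $PD$ while the paper chains it through $PB$, which yields the same cancellation.
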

\begin{figure}[ht!]
\begin{center}
\includegraphics[scale=0.7]{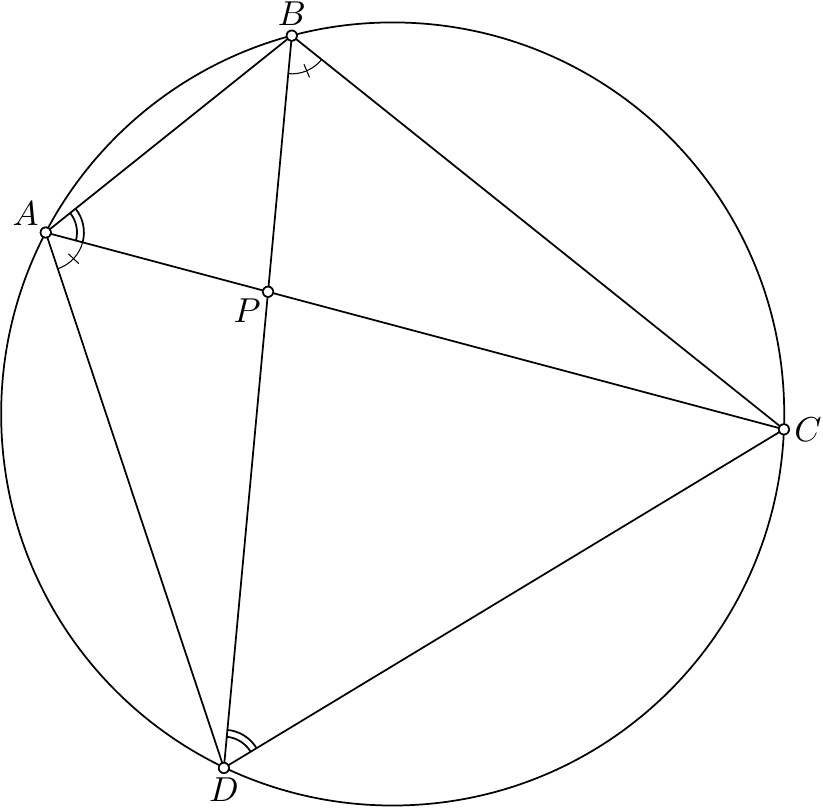}
\end{center}
\caption{Proof of Lemma~\ref{lem1}}
\end{figure}
\begin{proof}
 From the equality of the corresponding angles in the cyclic quadrilateral, we have the similar triangles $\triangle PAB\sim\triangle PDC$ and $\triangle PAD\sim\triangle PBC$. From this, we get the ratios
\begin{equation}\label{eq1.1}
\frac{PA}{PB}=\frac{AD}{BC}
\end{equation}
and
\begin{equation}\label{eq1.2}
\frac{PB}{PC}=\frac{AB}{CD}.
\end{equation}
From \eqref{eq1.1} and \eqref{eq1.2}, we obtain 
$$
\frac{PA}{PC}=\frac{AB\cdot AD}{CB\cdot CD}.
$$
The proof is complete.
\end{proof}

\begin{lem}[Ptolemy's theorem \cite{1}]\label{lem2}
For a cyclic quadrilateral, the sum of the products of the two pairs of opposite sides equals the product of the diagonals.
\end{lem}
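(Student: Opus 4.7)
The plan is to execute the classical similar-triangles proof with one auxiliary point. Writing the cyclic quadrilateral as $ABCD$, the goal is the identity $AC \cdot BD = AB \cdot CD + AD \cdot BC$. The key idea is to split the diagonal $BD$ at an auxiliary point $K$ so that $AC \cdot BK = AB \cdot CD$ and $AC \cdot KD = AD \cdot BC$; adding them yields Ptolemy immediately once $BK + KD = BD$ is justified.

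I would choose $K$ on segment $BD$ with $\angle BAK = \angle DAC$. First I would check that $\triangle ABK \sim \triangle ACD$ by AA: the angles at $A$ agree by construction, and $\angle ABK = \angle ABD = \angle ACD$ since both are inscribed angles subtending arc $AD$. This similarity yields $AB \cdot CD = AC \cdot BK$. Second, I would check that $\triangle ABC \sim \triangle AKD$ by AA: subtracting $\angle BAK = \angle CAD$ from $\angle BAD$ gives $\angle BAC = \angle KAD$, while $\angle ACB = \angle ADB$ as inscribed angles on arc $AB$ (and $\angle ADB = \angle KDA$ since $K$ lies on $BD$). This yields $AD \cdot BC = AC \cdot KD$. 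Summing the two identities produces $AB \cdot CD + AD \cdot BC = AC(BK + KD) = AC \cdot BD$, which is Ptolemy's theorem.

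The step I expect to be the main obstacle conceptually is spotting the right auxiliary point: the condition $\angle BAK = \angle DAC$ is not motivated a priori, and is precisely what is needed to force both similarities simultaneously. Beyond this insight, the angle chasing is routine and amounts to repeated application of the inscribed-angle theorem. The one technical point to verify is that $K$ truly lies between $B$ and $D$, so that $BK + KD = BD$ holds without a sign issue; this follows from the convexity of $ABCD$, which places the ray from $A$ at angle $\angle DAC$ strictly inside the angle $\angle BAD$, hence crossing the segment $BD$ in its interior. Note that Lemma~\ref{lem1} is not needed for this argument, though it uses the same pool of inscribed-angle similarities.
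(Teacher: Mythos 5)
Your proof is correct. Note, though, that the paper does not actually prove this lemma: it states Ptolemy's theorem, cites the reference, and remarks only that ``this is the famous theorem of plane geometry that can be found in \cite{1}.'' So there is no in-paper argument to compare against. What you give is the standard classical proof: the auxiliary point $K$ on $BD$ with $\angle BAK = \angle DAC$, the two similarities $\triangle ABK \sim \triangle ACD$ and $\triangle ABC \sim \triangle AKD$ obtained from the inscribed-angle theorem, and the addition $AC\cdot BK + AC\cdot KD = AC\cdot BD$. All of these steps check out, and you correctly identify and dispose of the one genuine subtlety, namely that $K$ lies strictly between $B$ and $D$ so that $BK + KD = BD$ without a sign issue; convexity of $ABCD$ is exactly what guarantees this. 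Your closing observation is also accurate: Lemma~\ref{lem1} is not an ingredient here, although both lemmas rest on the same pool of inscribed-angle similarities, and both are then used together in the proof of Lemma~\ref{lem3}.
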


This is the famous theorem of plane geometry that can be found in \cite{1}.

Using the concept of homogeneous barycentric coordinates \cite{10}, we give and prove the following lemma.

\begin{lem}\label{lem3}
Let $ABC$ be a triangle inscribed in a circle $(\omega)$. $P$ is a point inside triangle $ABC$.  $P$ has homogeneous barycentric coordinates $(x:y:z)$. $DEF$ is a cevian triangle of $P$. Ray $EF$ meets $(\omega)$ at $Q$. Then
$$
\frac{CA}{yQB}=\frac{BC}{xQA}+\frac{AB}{zQC}.
$$
\end{lem}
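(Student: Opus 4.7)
The plan is to introduce the second intersection $Q'$ of line $EF$ with $(\omega)$ and then combine Lemma~\ref{lem1} and Ptolemy (Lemma~\ref{lem2}) in tandem. From the homogeneous barycentric coordinates $(x:y:z)$ of $P$ one reads off $AF/FB=y/x$ and $AE/EC=z/x$. Since both $Q$ and $Q'$ lie on line $EF$, this line coincides with line $QQ'$, so $F=AB\cap QQ'$ is the intersection of the diagonals of the cyclic quadrilateral $AQBQ'$, and analogously $E=AC\cap QQ'$ is the intersection of the diagonals of $AQCQ'$. Applying Lemma~\ref{lem1} to each of these quadrilaterals yields
$$
\frac{y}{x}=\frac{AF}{FB}=\frac{AQ\cdot AQ'}{BQ\cdot BQ'},\qquad \frac{z}{x}=\frac{AE}{EC}=\frac{AQ\cdot AQ'}{CQ\cdot CQ'},
$$
so that there is a common positive $\lambda$ with $AQ\cdot AQ'=yz\lambda$, $BQ\cdot BQ'=xz\lambda$, $CQ\cdot CQ'=xy\lambda$.

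The second step is to apply Ptolemy (Lemma~\ref{lem2}) to the convex cyclic quadrilateral $AQ'CB$, whose diagonals are $AC$ and $Q'B$. This gives
$$
AC\cdot Q'B=AQ'\cdot CB+Q'C\cdot AB.
$$
Substituting $AQ'=yz\lambda/AQ$, $BQ'=xz\lambda/BQ$, $CQ'=xy\lambda/CQ$ from the previous step and canceling the common factor $\lambda$ produces
$$
\frac{xz\cdot CA}{BQ}=\frac{yz\cdot BC}{AQ}+\frac{xy\cdot AB}{CQ},
$$
which after division by $xyz$ is exactly the claimed identity.

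The main subtlety is configurational: one must verify that $AQBQ'$, $AQCQ'$, and $AQ'CB$ really are convex cyclic quadrilaterals so that Lemmas~\ref{lem1} and~\ref{lem2} apply in the stated form. Since $P$ lies inside $\triangle ABC$, the points $E$ and $F$ lie strictly inside the sides $CA$ and $AB$, the forward ray $EF$ exits the triangle across side $AB$ at $F$, and hence $Q$ lands on the arc $AB$ not containing $C$, while $Q'$ lands on the arc $CA$ not containing $B$. This forces the cyclic order $A,Q,B,C,Q'$ on $(\omega)$, and convexity of all three quadrilaterals follows immediately. Once the configuration is fixed, the remaining work is the short algebraic substitution described above.
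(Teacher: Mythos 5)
Your proof is correct and follows essentially the same route as the paper: introduce the second intersection of line $EF$ with $(\omega)$ (your $Q'$, the paper's $R$), apply Lemma~\ref{lem1} at $E$ and at $F$ to express $AQ\cdot AQ'$, $BQ\cdot BQ'$, $CQ\cdot CQ'$ in proportion to $yz$, $xz$, $xy$, and finish with one application of Ptolemy to the quadrilateral on $A$, $B$, $C$, $Q'$. Your explicit verification of the cyclic order $A,Q,B,C,Q'$ is a welcome addition the paper leaves implicit.
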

\begin{figure}[ht!]
\begin{center}
\includegraphics[scale=0.7]{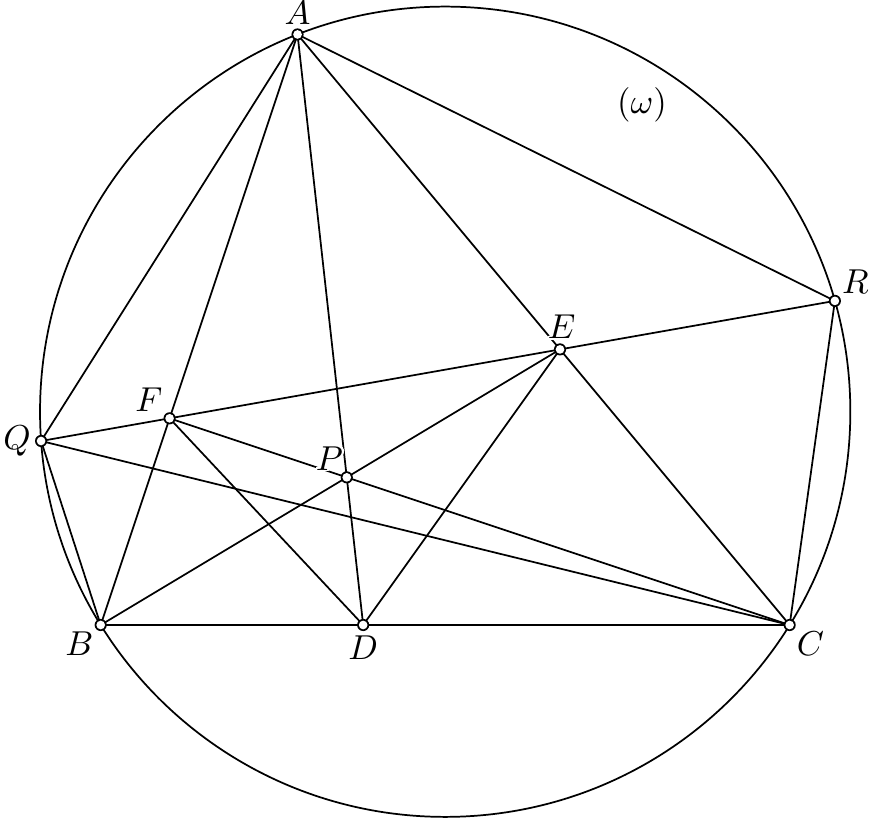}
\end{center}
\caption{Proof of Lemma~\ref{lem3}}
\end{figure}
\begin{proof}If $P$ has homogeneous barycentric coordinates $(x:y:z)$, we get that the barycentric coordinates of $E$, $F$ are $(x:0:z)$ and $(x:y:0)$ respectively. Thus we have the ratio
\begin{equation}\label{eq2.1}
\frac{EA}{EC}=\frac{z}{x}.
\end{equation}
Let the ray $QE$ meet $(\omega)$ second time at $R$. From Lemma~\ref{lem1}, we have
\begin{equation}\label{eq2.2}
\frac{EA}{EC}=\frac{AQ\cdot AR}{CQ\cdot CR}.
\end{equation}
From \eqref{eq2.1} and \eqref{eq2.2}, we deduce
$$
\frac{AQ\cdot AR}{CQ\cdot CR}=\frac{z}{x}.
$$
Thus 
\begin{equation}\label{eq2.3}
zQC=x\frac{AQ\cdot AR}{CR}.
\end{equation}
Similarly, we have the identity
$$
\frac{AQ\cdot AR}{BQ\cdot BR}=\frac{y}{x}.
$$
Thus 
\begin{equation}\label{eq2.4}
yQB=x\frac{AQ\cdot AR}{BR}.
\end{equation}
Using \eqref{eq2.3}, \eqref{eq2.4}, and Lemma~\ref{lem2}, we consider the expression
\begin{align*}
\frac{CA}{yQB}-\frac{AB}{zQC}
&=\frac{CA}{x\frac{AQ\cdot AR}{BR}}-\frac{AB}{x\frac{AQ\cdot AR}{CR}}\\
&=\frac{CA\cdot RB-AB\cdot RC}{x{AQ\cdot AR}}\\
&=\frac{BC\cdot RA}{x{AQ\cdot AR}}\\
&=\frac{BC}{xQA}.
\end{align*}
Therefore,
$$
\frac{CA}{yQB}=\frac{BC}{xQA}+\frac{AB}{zQC}.
$$
This completes the proof of Lemma~\ref{lem3}.
\end{proof}

\section{Constructions and proofs}

\begin{con}\label{con1}
Given a triangle $ABC$ with the circumcircle $(\omega)$.
\begin{itemize}
\item[i)] Constructing the symmedians $BE$ and $CF$.

\item[ii)] Ray $EF$ meets $(\omega)$ at $P$.

\item[iii)] The parallel line from $P$ to $BC$ meets $AB$ and $AC$ at $Q$ and $R$ respectively and meets $(\omega)$ again at $S$.
\end{itemize}
\end{con}
\begin{pro}\label{pro1}
$R$ divides $QS$ into the golden ratio.
\begin{figure}[ht!]
\begin{center}
\includegraphics[scale=0.7]{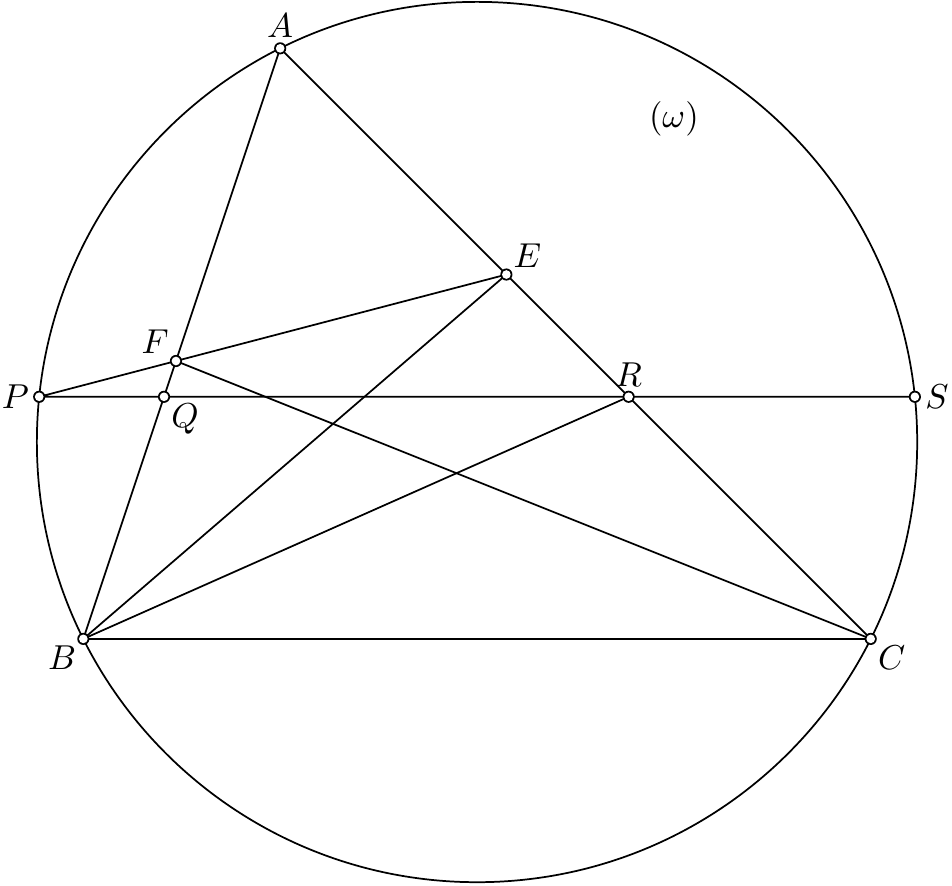}
\end{center}
\caption{The Construction~\ref{con1}}
\end{figure}
\end{pro}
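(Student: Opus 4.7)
The plan is to recognise the intersection of the symmedians $BE$ and $CF$ as the Lemoine point $K$, whose homogeneous barycentric coordinates are $(a^2:b^2:c^2)$, so that $E$ and $F$ (together with the foot of the third symmedian) form the cevian triangle of $K$. Applying Lemma~\ref{lem3} with $(x,y,z)=(a^2,b^2,c^2)$ yields
\[
\frac{1}{b\cdot PB} \;=\; \frac{1}{a\cdot PA} + \frac{1}{c\cdot PC}.
\]

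Next, since $P$ lies on the arc $AB$ not containing $C$ (the ray $EF$ leaves the triangle past $F$ on the opposite side of $AB$ from $C$), the quadrilateral $APBC$ is cyclic in that order, and Ptolemy's theorem (Lemma~\ref{lem2}) gives $c\cdot PC = a\cdot PA + b\cdot PB$. Writing $u=a\cdot PA$, $v=b\cdot PB$, $w=c\cdot PC$, the two identities become $w=u+v$ and $uw=v(u+w)$; eliminating $w$ forces $u^{2}-uv-v^{2}=0$, whose positive solution is $u/v=(1+\sqrt{5})/2=\phi$. Thus $a\cdot PA=\phi\cdot b\cdot PB$, and the golden ratio has already appeared.

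It remains to turn this into $QR/RS=\phi$. Because $\ell = QRS$ is parallel to $BC$, elementary angle-chasing gives $\angle PQB=\angle ABC$ and $\angle PRC=180^{\circ}-\angle ACB$, while the inscribed-angle theorem applied to chord $PA$ makes $\sin\angle PBA$ and $\sin\angle PCA$ both proportional to $PA$. The law of sines in $\triangle PQB$ and $\triangle PRC$ then reduces to $PQ=PA\cdot PB/b$ and $PR=PA\cdot PC/c$, so
\[
QR \;=\; PR-PQ \;=\; \frac{PA\,(b\cdot PC-c\cdot PB)}{bc}.
\]
The trapezoid $PBCS$ is isosceles ($PS\parallel BC$), so $PB=SC$ and $PC=SB$. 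The same method in $\triangle SRC$ gives $RS=SC\cdot SA/c$, while Ptolemy on the cyclic quadrilateral $SABC$ becomes $a\cdot SA = b\cdot PC-c\cdot PB$. Combining, $QR/RS = a\cdot PA/(b\cdot PB)=\phi$, as required.

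The hard part is the second paragraph: seeing that Lemma~\ref{lem3} together with Ptolemy on $APBC$ forces the quadratic $u^{2}=uv+v^{2}$ whose positive root is $\phi$. Once that relation is in hand, the translation to $QR/RS$ is just inscribed-angle and law-of-sines bookkeeping, and the identity $b\cdot PC-c\cdot PB = a\cdot SA$ (Ptolemy on $SABC$, exploiting the trapezoid symmetry $PB=SC$, $PC=SB$) is what makes the two law-of-sines outputs combine cleanly into the factor $a\cdot PA/(b\cdot PB)$.
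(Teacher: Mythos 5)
Your proposal is correct and follows essentially the same route as the paper: identify the intersection of the two symmedians as the Lemoine point $(a^2:b^2:c^2)$, apply Lemma~\ref{lem3} to its cevian triangle together with Ptolemy's theorem on $APBC$ to force the quadratic $u^2-uv-v^2=0$ for $u=BC\cdot PA$, $v=CA\cdot PB$, and then show $QR/RS=u/v$. The only divergence is in that last translation step, where you compute $PQ$, $PR$, $RS$ by the extended law of sines and invoke Ptolemy a second time on $ABCS$, whereas the paper reaches the same identity $QR/RS=\frac{BC\cdot PA}{CA\cdot PB}$ via the similar triangles $\triangle ARP\sim\triangle SRC$, $\triangle AQP\sim\triangle SQB$ and the isosceles trapezoid $PSCB$; both versions are valid.
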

\begin{proof}
It follows from $\triangle ARP\sim\triangle SRC$ and $\triangle AQP\sim\triangle SQB$ that
$$
\frac{RA}{RS}=\frac{AP}{SC}
$$
and
$$
\frac{QS}{QA}=\frac{SB}{AP}.
$$
Therefore,
\begin{equation}\label{eq3.1}
\frac{QS}{RS}\cdot\frac{RA}{QA}=\frac{SB}{SC}.
\end{equation}
Because $QR\parallel BC$,
\begin{equation}\label{eq3.2}
\frac{RA}{QA}=\frac{AC}{AB}.
\end{equation}
Also by $PS\parallel BC$, we have $PSCB$ is an isosceles trapezoid. Hence,
\begin{equation}\label{eq3.3}
SB=PC,\quad SC=PB.
\end{equation}
From \eqref{eq3.1}, \eqref{eq3.2} and \eqref{eq3.3}, we deduce
\begin{equation}\label{eq3.4}
\frac{QS}{RS}=\frac{AB\cdot PC}{CA\cdot PB}.
\end{equation}
\begin{figure}[ht!]
\centering
\includegraphics[scale=0.7]{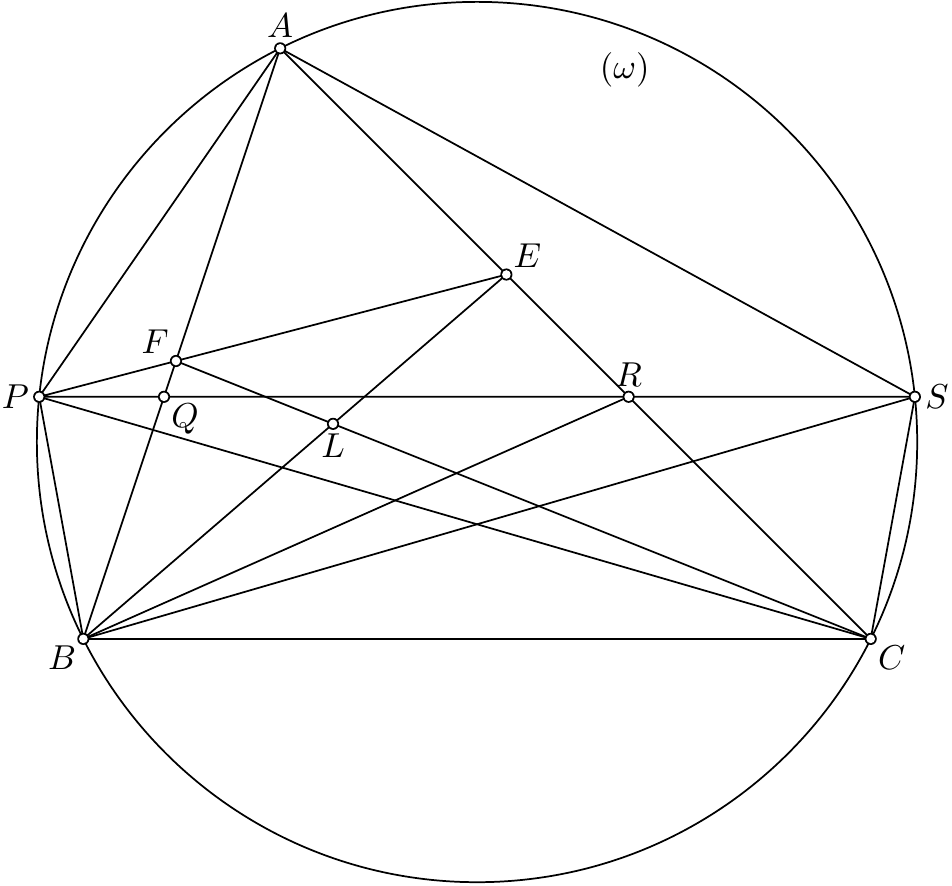}
\caption{Proof of Proposition~\ref{pro1}}
\end{figure}
Note that, by Lemma~\ref{lem2},
\begin{equation}\label{eq3.5}
BC\cdot PA=AB\cdot PC-CA\cdot PB.
\end{equation}
From \eqref{eq3.4} and \eqref{eq3.5}, we infer that
\begin{equation}\label{eq3.6}
\frac{RQ}{RS}=\frac{QS}{RS}-1=\frac{AB\cdot PC}{CA\cdot PB}-1=\frac{AB\cdot PC-AC\cdot PB}{AC\cdot PB}=\frac{BC\cdot PA}{CA\cdot PB}.
\end{equation}
Let $BE$ meet $CF$ at $L$, then $L$ has barycentric coordinates $L(BC^2:CA^2:AB^2)$, see \cite{10}. Apply Lemma~\ref{lem3} for triangle $ABC$ with $L$ and ray $EF$ meet $(\omega)$ at $P$, we obtain
$$
\frac{CA}{CA^2\cdot PB}=\frac{BC}{BC^2\cdot PA}+\frac{AB^2}{AB\cdot PC}.
$$
This is equivalent to
\begin{equation}\label{eq3.7}
\frac{1}{CA\cdot PB}=\frac{1}{BC\cdot PA}+\frac{1}{AB\cdot PC}.
\end{equation}
From \eqref{eq3.5} and \eqref{eq3.7}, we have
$$
\frac{1}{CA\cdot PB}=\frac{1}{BC\cdot PA}+\frac{1}{CA\cdot PB+BC\cdot PA}.
$$
This means
\begin{equation}\label{eq3.8}
\frac{BC\cdot PA}{CA\cdot PB}-\frac{CA\cdot PB}{BC\cdot PA}=1.
\end{equation}
From \eqref{eq3.6} and \eqref{eq3.8}, we secure
$$\frac{RQ}{RS}-\frac{RS}{RQ}=1.
$$
This is enough to show that the ratio
$$
\frac{RQ}{RS}=\frac{\sqrt{5}+1}{2}
$$
which is such the golden ratio. This completes our proof.
\end{proof}

\begin{rem}
This construction can be considered as the generalization of the construction \cite{2} and \cite{5}. If $ABC$ is isosceles at $A$, we obtain the construction in \cite{2}. If $ABC$ is equilateral, we obtain the construction of George Odom in \cite{5}.
\end{rem}

\begin{con}\label{con2}
Given a triangle $ABC$ with the circumcircle $(\omega)$.
\begin{itemize}
\item[i)] Constructing the symmedians $AD$ and $CF$.

\item[ii)] Ray $DF$ meets $(\omega)$ at $P$.

\item[iii)] The parallel line from $P$ to $BC$ meets $AB$ and $AC$ at $Q$ and $R$ respectively and meets $(\omega)$ again at $S$.
\end{itemize}
\end{con}
\begin{pro}\label{pro2}
$R$ divides $SQ$ into the golden ratio.
\begin{figure}[ht!]
\centering
\includegraphics[scale=0.7]{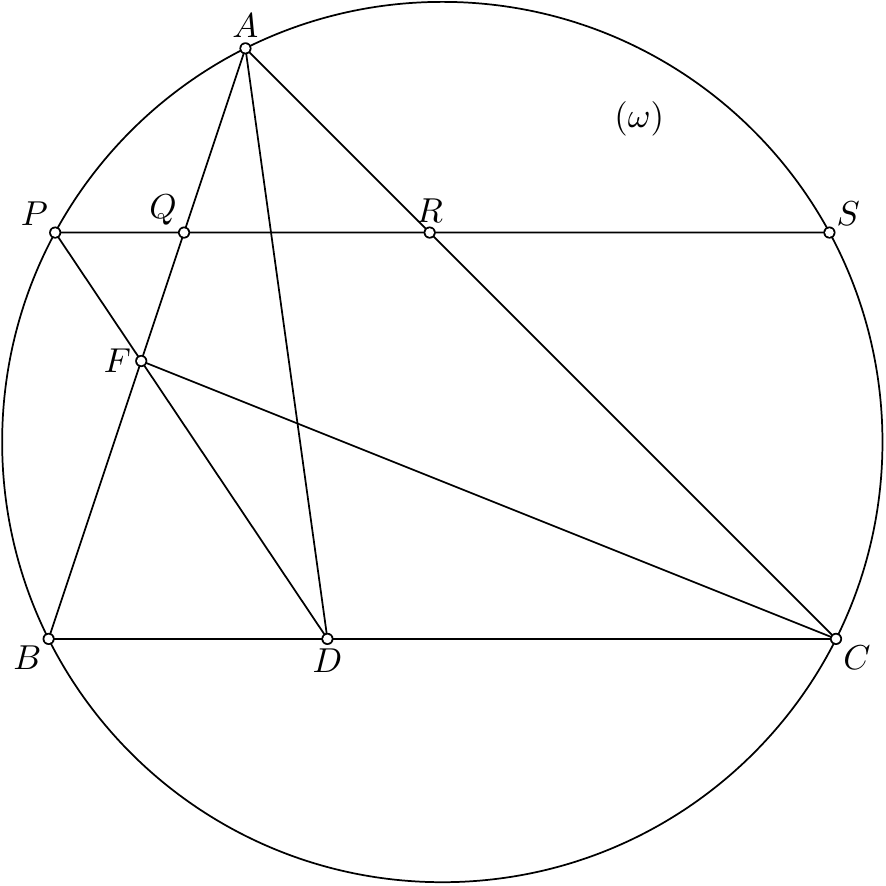}
\caption{The Construction~\ref{pro2}}
\end{figure}
\end{pro}
\begin{proof}
The proof is similar to the proof of Proposition~\ref{pro1}
$$
\frac{RQ}{RS}=\frac{BC\cdot PA}{CA\cdot PB}
$$
or
\begin{equation}\label{eq3.2.1}
\frac{RS}{RQ}=\frac{CA\cdot PB}{BC\cdot PA}.
\end{equation}
Let the symmedians $AD$ and $CF$ meet at $L$, then $L(BC^2:CA^2:AB^2)$, see \cite{1}. Apply Lemma~\ref{lem2} for triangle $ABC$ with $L$ and ray $DF$ meet $(\omega)$ at $P$, we have
$$
\frac{BC}{BC^2\cdot PA}=\frac{AB}{AB^2\cdot PC}+\frac{CA}{CA^2\cdot PB}.
$$
\begin{figure}[ht!]
\centering
\includegraphics[scale=0.7]{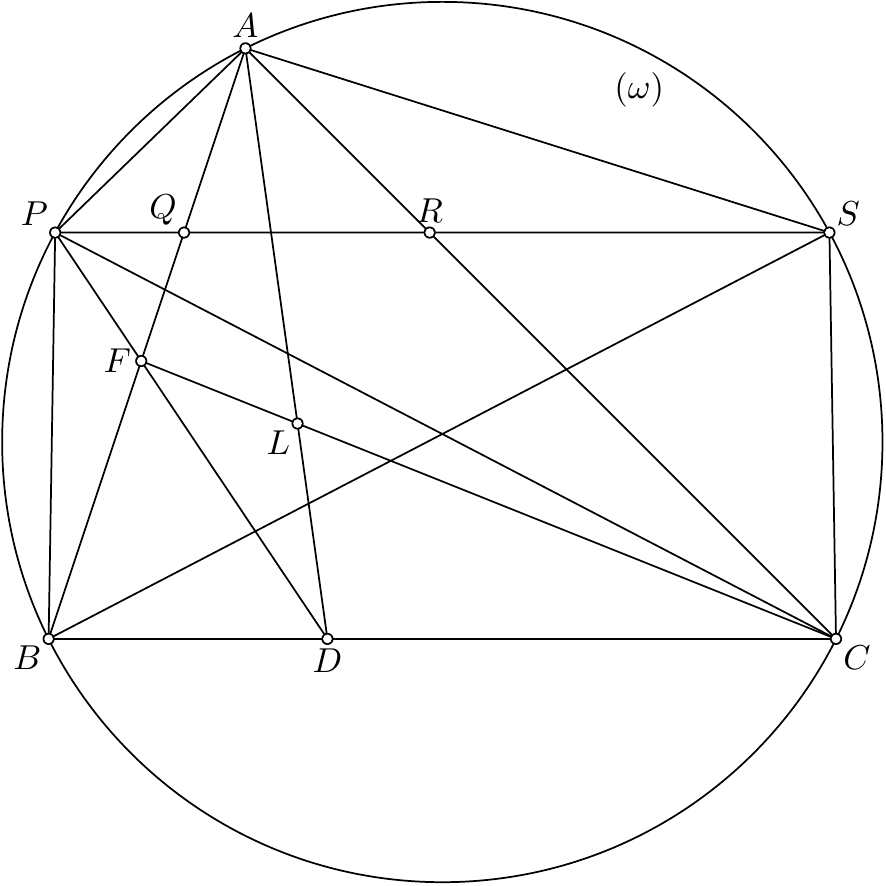}
\caption{Proof of Proposition~\ref{pro2}}
\end{figure}
This is equivalent to
\begin{equation}\label{eq3.2.2}
\frac{1}{BC\cdot PA}=\frac{1}{AB\cdot PC}+\frac{1}{CA\cdot PB}.
\end{equation}
Note that, Lemma~\ref{lem2} then
\begin{equation}\label{eq3.2.3}
AB\cdot PC=BC\cdot PA+CA\cdot PB.
\end{equation}
From \eqref{eq3.2.2} and \eqref{eq3.2.3} we have
\begin{equation}\label{eq3.2.4}
\frac{1}{BC\cdot PA}=\frac{1}{BC\cdot PA+CA\cdot PB}+\frac{1}{CA\cdot PB}.
\end{equation}
This means
\begin{equation}\label{eq3.2.5}
\frac{CA\cdot PB}{BC\cdot PA}-\frac{BC\cdot PA}{CA\cdot PB}=1.
\end{equation}
From \eqref{eq3.2.1} and \eqref{eq3.2.5}, we deduce that 
$$
\frac{RS}{RQ}-\frac{RQ}{RS}=1,
$$
this is enough for deriving equality
$$
\frac{RS}{RQ}=\frac{\sqrt{5}+1}{2}
$$
which is such the golden ratio. This completes our proof.
\end{proof}

\begin{con}\label{con3}
Given a triangle $ABC$ with the nine-point center $N$. $D$, $E$, and $F$ are the midpoints of $BC$, $CA$, and $AB$, respectively. Consider the circle $(\omega)$ with diameter $BC$.
\begin{itemize}
\item[i)] The perpendicular line from $D$ to $ND$ meets circle $(\omega)$ at $S$ and $T$.

\item[ii)] Consider the circle $(\Omega)$ with center $N$ passing through $S$ and $T$.

\item[iii)] Ray $EF$ meets circle $(\Omega)$ at $G$. 
\end{itemize}
\end{con}
\begin{pro}\label{pro3}
$E$ divides $FG$ into the golden ratio.
\begin{figure}[ht!]
\centering
\includegraphics[scale=0.7]{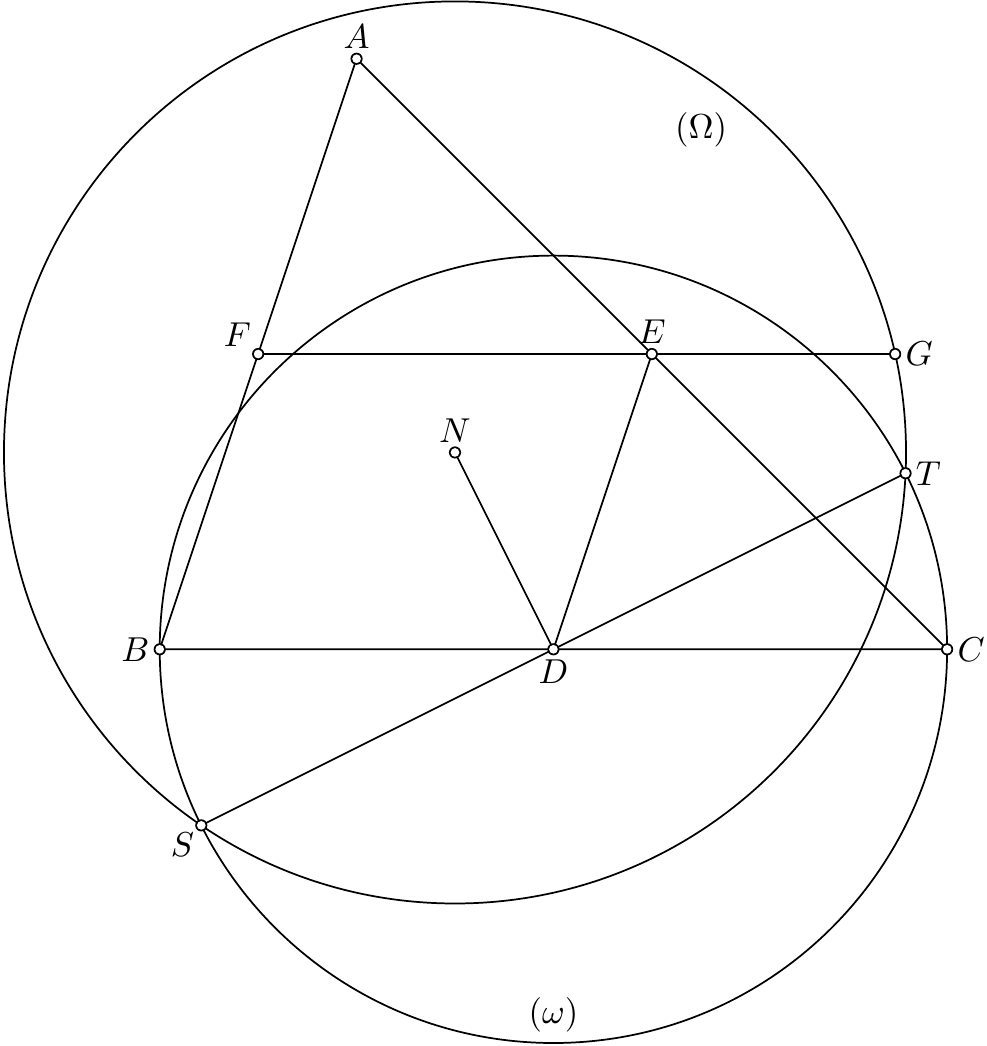}
\caption{The Construction~\ref{con3}}
\end{figure}
\end{pro}
\begin{proof}
We shall show that 
$$
EG\cdot GF=EF^2.
$$
Because $N$ is the nine-point center of triangle $ABC,$ thus $N$ must be the circumcenter of triangle $DEF$. Also, $DEFB$ is a parallelogram, so $EF=BD=DT$.
\begin{figure}[ht!]
\centering
\includegraphics[scale=0.7]{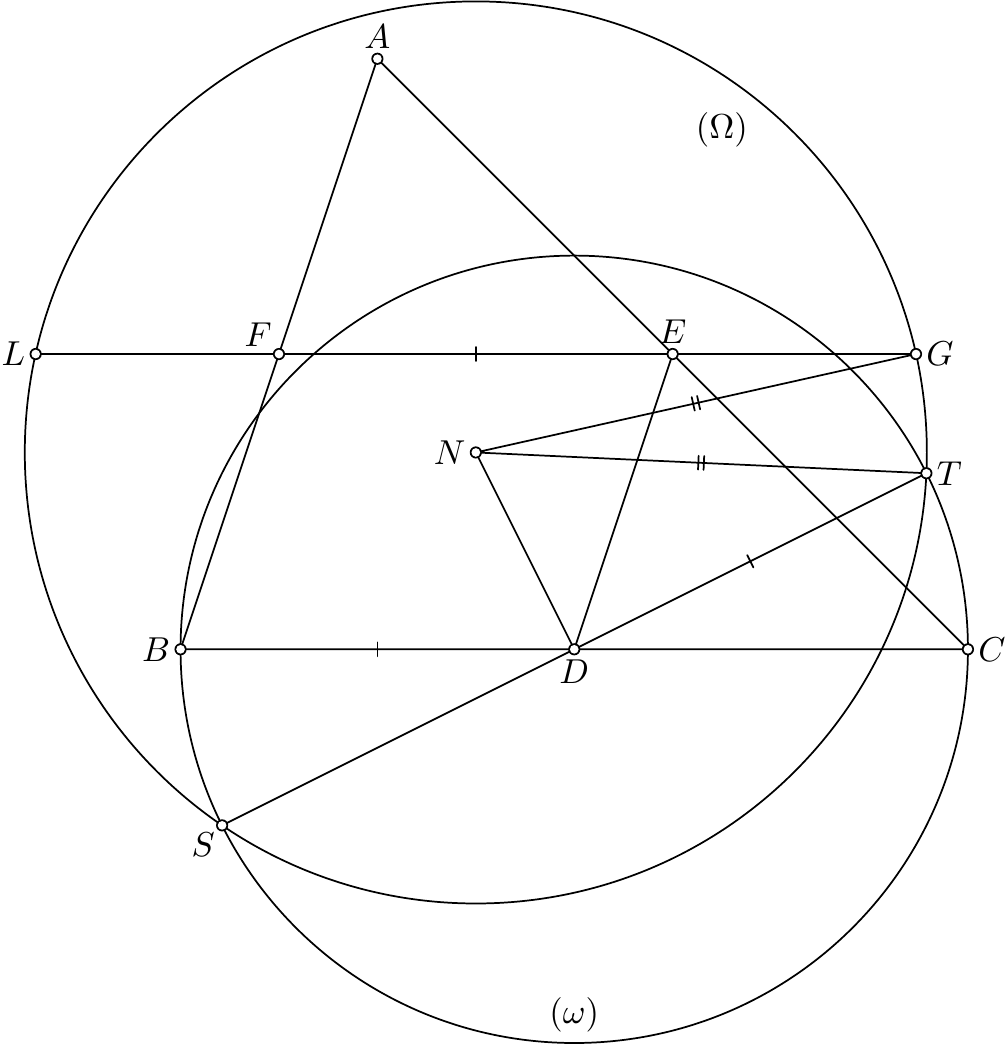}
\caption{Proof of Proposition~\ref{pro3}}
\end{figure}
Let $EF$ meet $(\Omega)$ again at $L$. By symmetry and the intersecting chords theorem,
$$
EG\cdot GF=EG\cdot EL=NG^2-NE^2=NT^2-ND^2=DT^2=DB^2=EF^2.
$$
This proves that $E$ divides $FG$ in the golden ratio. Similarly $F$ divides $EL$ into the golden ratio. We complete the proof.
\end{proof}

\begin{rem}
This construction also can be considered as another generalization of the classical construction of George Odom in \cite{5}. If $ABC$ is an equilateral triangle, we obtain the construction in \cite{5}.
\end{rem}

\begin{con}\label{con4}
Given a triangle $ABC$.
\begin{itemize}
\item[i)] Consider the symmedian $BE$.

\item[ii)] Let $F$ be a point on segment $AE$ such that $\frac{FE}{EC}=\frac{1}{5}$.

\item[iii)] The parallel line from $F$ to $BE$ meets $AB$ at $G$.

\item[iv)] The perpendicular bisectors of $AG$ and $BC$ meet at $K$.

\item[v)] The circle with center $K$ passing though $A$ meets $BC$ at $MN$. 
\end{itemize}
\end{con}
\begin{pro}\label{pro4}$M$ divides $NB$ into the golden ratio.
\begin{figure}[ht!]
\centering
\includegraphics[scale=0.7]{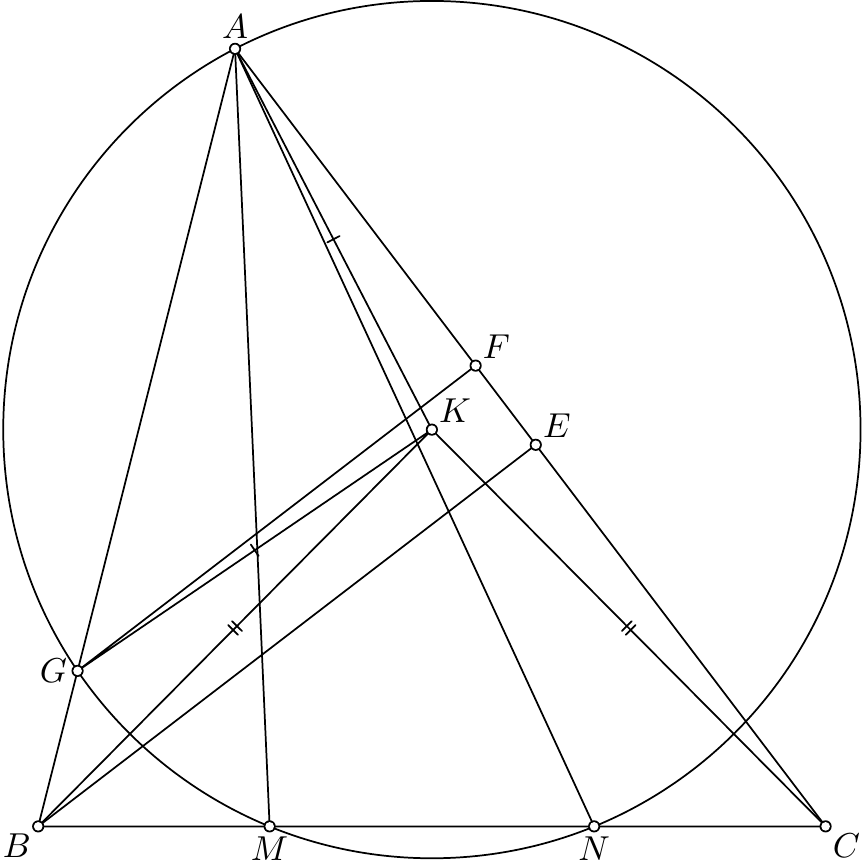}
\caption{The Construction~\ref{con4} and proof of Proposition~\ref{pro4}}
\end{figure}
\end{pro}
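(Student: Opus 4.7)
My plan is to apply the power of the point $B$ with respect to the circle $(K)$. Two things follow immediately from the definition of $K$: because $K$ lies on the perpendicular bisector of $AG$, we have $KG=KA$, so $G$ lies on the circle $(K)$ and hence $A,G,M,N$ are concyclic; because $K$ lies on the perpendicular bisector of $BC$, the two intersection points $M,N$ are symmetric about the midpoint of $BC$, so $BM+BN=BC$. The power of $B$ along the two secants $BGA$ and $BMN$ then gives $BG\cdot BA=BM\cdot BN$. Once this product is evaluated, the two lengths $BM,BN$ will be determined as the two roots of a quadratic with known sum and product.

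Next I would compute $BG\cdot BA$. With the usual notation $a=BC$, $c=AB$, the symmedian $BE$ splits $AC$ in the ratio $AE:EC=c^2:a^2$, and the condition $FE/EC=1/5$ then gives $FE/AE=a^2/(5c^2)$. Because $FG\parallel BE$, the intercept theorem applied in $\triangle ABE$ yields $AG/AB=AF/AE$. A short cancellation should collapse $BG$ to $a^2/(5c)$, so that
\[
BG\cdot BA=\frac{a^2}{5}=\frac{BC^2}{5}.
\]
The specific constant $1/5$ in the construction is clearly chosen precisely so that this product comes out in terms of $a$ alone.

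With $BM+BN=a$ and $BM\cdot BN=a^2/5$, the two lengths are the roots of $t^2-at+a^2/5=0$. Solving explicitly produces
\[
\{BM,BN\}=\Bigl\{\tfrac{a(\sqrt{5}-1)}{2\sqrt{5}},\ \tfrac{a(\sqrt{5}+1)}{2\sqrt{5}}\Bigr\},
\]
and taking $M$ to be the root closer to $B$, computing $NM/MB$ immediately returns $(\sqrt{5}+1)/2$.

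The step I expect to be the main obstacle is verifying the clean simplification $BG\cdot BA=BC^2/5$: every other piece (concyclicity from $KA=KG$, power of a point, symmetry about the midpoint of $BC$, and the final quadratic) is either standard or mechanical. A small ancillary check is that $B$ lies outside $(K)$, so that the unsigned power formula is valid; this amounts to $5c^2>a^2$, which is already required for $F$ to lie on the segment $AE$.
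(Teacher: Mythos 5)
Your proposal is correct and follows essentially the same route as the paper's own proof: concyclicity of $A,G,M,N$ from $KA=KG$, the power of $B$ giving $BM\cdot BN=BG\cdot BA=BC^2/5$ via the symmedian ratio and the intercept theorem, and $BM+BN=BC$ from the symmetry about the perpendicular bisector of $BC$. The only (immaterial) difference is that you solve the resulting quadratic for $BM$ and $BN$ explicitly, whereas the paper reads off $BN/BM=(3+\sqrt5)/2$ directly from the relation $(BM+BN)^2=5\,BM\cdot BN$.
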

\begin{proof}
Because $BE$ is symmedian of $ABC$, we conclude that $\frac{EA}{EC}=\frac{BA^2}{BC^2}$. $EC=5\cdot EF$ or in other words
\begin{equation}\label{eq3.4.1}
\frac{EA}{EF}=5\cdot \frac{BA^2}{BC^2}.
\end{equation}
$K$ lies on perpendicular bisector of $AG$, circle $(K)$ passes through $G$, we have 
\begin{equation}\label{eq3.4.2}
BM\cdot BN=BG\cdot BA=\frac{BG}{BA}\cdot BA^2=\frac{EF}{EA}\cdot BA^2.
\end{equation}
From \eqref{eq3.4.1} and \eqref{eq3.4.2}, we obtain
\begin{equation}\label{eq3.4.3}
BM\cdot BN=\frac{BC^2}{5}.
\end{equation}
Because $K$ lies on perpendicular bisector of $BC$, so $BM=CN$ and we deduce that
\begin{equation}\label{eq3.4.4}
BM+BN=CN+BN=BC.
\end{equation}
From \eqref{eq3.4.3} and \eqref{eq3.4.4}, we get
$$
(BM+BN)^2=5\cdot BM\cdot BN,
$$
this is equivalent to
$$
BN^2-3\cdot BN\cdot BM+BM^2=0
$$
or
$$
\frac{BN}{BM}=\frac{3+\sqrt{5}}{2}
$$
or
$$
\frac{MN}{MB}=\frac{\sqrt{5}+1}{2}
$$
which is such the golden ratio. This completes our proof.
\end{proof}
\begin{rem}
This construction can be considered as generalization of the construction of Dao Thanh Oai in \cite{oai}. If $ABC$ is equilateral triangle or right isosceles, we obtain the construction in \cite{oai}.
\end{rem}

\begin{ac}The author is grateful \textbf{Alexander Skutin} for his proofreading and his suggestion of the nice projective viewpoint for the proofs of Proposition~\ref{pro1} and Proposition~\ref{pro2}.
\end{ac}

\bibliographystyle{plain}


\end{document}